\newcommand{\RT}{\mathrm{RT}}
\newcommand{\URT}{\mathrm{URT}}
\newcommand{\ART}{\mathrm{ART}}
\renewcommand{\tt}[1]{\textnormal{\texttt{#1}}}
\newcommand{\rev}[1]{{#1}^{R}}
\newtheorem{theorem}{Theorem}
\newtheorem{Lemma}[theorem]{Lemma}
\theoremstyle{definition}
\newtheorem{Conjecture}[theorem]{Conjecture}
\begin{document}
\title{The undirected repetition threshold}
\author{James D.~Currie\thanks
{Supported by the Natural Sciences and Engineering Research Council of Canada (NSERC), [funding reference number 2017-03901].} and Lucas Mol\\
\\
{\normalsize The University of Winnipeg}\\
{\normalsize Winnipeg, Manitoba, Canada R3B 2E9}\\
{\normalsize \{j.currie,l.mol\}@uwinnipeg.ca}}
\date{June 2019}

\maketitle              

\begin{abstract}
\noindent
For rational $1<r\leq 2$, an \emph{undirected $r$-power} is a word of the form $xyx'$, where $x$ is nonempty, $x'\in\{x,\rev{x}\}$, and $|xyx'|/|xy|=r$.  The \emph{undirected repetition threshold} for $k$ letters, denoted $\URT(k)$, is the infimum of the set of all $r$ such that undirected $r$-powers are avoidable on $k$ letters.  We first demonstrate that $\URT(3)=\tfrac{7}{4}$.  Then we show that $\URT(k)\geq \tfrac{k-1}{k-2}$ for all $k\geq 4$.  We conjecture that $\URT(k)=\tfrac{k-1}{k-2}$ for all $k\geq 4$, and we confirm this conjecture for $k\in\{4,8,12\}.$

\noindent
Keywords: Repetition thresholds, Gapped repeats, Gapped palindromes, Pattern avoidance, Patterns with reversal
\end{abstract}

\section{Introduction}\label{Intro}

A \emph{square} is a word of the form $xx$, where $x$ is a nonempty word.  An \emph{Abelian square} is a word of the form $x\tilde{x}$, where $\tilde{x}$ is an anagram (or permutation) of $x$.  
The notions of square and Abelian square can be extended to fractional powers in a natural way.  Let $1<r\leq 2$ be a rational number.  An \emph{(ordinary) $r$-power} is a word of the form $xyx$, where $x$ is a nonempty word, and $|xyx|/|xy|=r$.  An \emph{Abelian $r$-power} is a word of the form $xy\tilde{x}$, where $x$ is a nonempty word, $\tilde{x}$ is an anagram of $x$, and $|xy\tilde{x}|/|xy|=r$.\footnote{We use the definition of Abelian $r$-power of Cassaigne and Currie~\cite{CassaigneCurrie1999}.  We note that several distinct definitions exist (see~\cite{SamsonovShur2012,Fici2016}, for example).}

In general, if $\sim$ is an equivalence relation on words that respects length (i.e., we have $|x|=|x'|$ whenever $x\sim x'$), then an \emph{$r$-power up to $\sim$} is a word of the form $xyx'$, where $x$ is nonempty, $x\sim x'$, and $|xyx'|/|xy|=r$. 
The notion of $r$-power up to $\sim$ generalizes ordinary $r$-powers and Abelian $r$-powers, where the equivalence relations are equality and ``is an anagram of'', respectively.

Let $\sim$ be an equivalence relation on words that respects length.  For a real number $1<\alpha\leq 2$, a word $w$ is called \emph{$\alpha$-free up to $\sim$} if no factor of $w$ is an $r$-power up to $\sim$ for $r\geq \alpha$.  Moreover, the word $w$ is called \emph{$\alpha^+$-free up to $\sim$} if no factor of $w$ is an $r$-power up to $\sim$ for $r>\alpha$.  For every integer $k\geq 2$, we say that $\alpha$-powers up to $\sim$ are \emph{$k$-avoidable} if there is an infinite word on $k$ letters that is $\alpha$-free up to $\sim$, and \emph{$k$-unavoidable} otherwise.  For every integer $k\geq 2$, the \emph{repetition threshold up to $\sim$} for $k$ letters, denoted $\RT_\sim(k)$, is defined as
\[
\RT_\sim(k)=\inf\{r\colon\ \mbox{$r$-powers up to $\sim$ are $k$-avoidable}\}.
\]
Since we have only defined $r$-powers for $r\leq 2$, it follows that $\RT_\sim(k)\leq 2$ or $\RT_\sim(k)=\infty$ for any particular value of $k$.

It is well-known that squares are $3$-avoidable~\cite{Berstel1995}.  Thus, for $k\geq 3$, we have that $RT_{=}(k)$ is the usual \emph{repetition threshold}, denoted simply $\RT(k)$.  Dejean~\cite{Dejean1972} proved that $\RT(3)=7/4$, and conjectured that $\RT(4)=7/5$ and $\RT(k)=k/(k-1)$ for all $k\geq 5$.  This conjecture has been confirmed through the work of many authors~\cite{CurrieRampersad2011,Rao2011,Pansiot1984,Dejean1972,Carpi2007,MoulinOllagnier1992,CurrieRampersad2009Again,Noori2007}.

It is also known that Abelian squares are $4$-avoidable~\cite{Keranen1992}.  Let $\approx$ denote the equivalence relation ``is an anagram of''.  Thus, for all $k\geq 4$, we see that $\RT_\approx(k)$ is equal to the \emph{Abelian repetition threshold} (or \emph{commutative repetition threshold}) for $k$ letters, introduced by Cassaigne and Currie~\cite{CassaigneCurrie1999}, and denoted $\ART(k)$.  Relatively less is known about the Abelian repetition threshold.  Cassaigne and Currie~\cite{CassaigneCurrie1999} give (weak) upper bounds on $\ART(k)$ in demonstrating that $\lim_{k\rightarrow\infty}\ART(k)=1$. Samsonov and Shur~\cite{SamsonovShur2012} conjecture that
$\ART(4)=9/5$ and $\ART(k)=(k-2)/(k-3)$ for all $k\geq 5$,
and give a lower bound matching this conjecture.\footnote{Samsonov and Shur define weak, semi-strong, and strong Abelian $\alpha$-powers for all real numbers $\alpha>1$.  For rational $1<r\leq 2$, their definitions of semi-strong Abelian $r$-power and strong Abelian $r$-power are both equivalent to our definition of Abelian $r$-power.}

For every word $x=x_1x_2\cdots x_n$, where the $x_i$ are letters, we let $\rev{x}$ denote the \emph{reversal of $x$}, defined by $\rev{x}=x_n\cdots x_2x_1$.  For example, if $x=\tt{time}$ then $\rev{x}=\tt{emit}$.  Let $\simeq$ be the equivalence relation on words defined by $x\simeq x'$ if $x'=x$ or $x'=\rev{x}$.  In this article, we focus on determining $\RT_{\simeq}(k)$. We simplify our notation and terminology as follows.  We refer to $r$-powers up to $\simeq$ as \emph{undirected $r$-powers}.  These come in two types: words of the form $xyx$ are ordinary $r$-powers, while we refer to words of the from $xyx^R$ as \emph{reverse $r$-powers}.\footnote{We note that words of the form $xyx$ are sometimes referred to as \emph{gapped repeats}, and that words of the form $xy\rev{x}$ are sometimes referred to as \emph{gapped palindromes}.  In particular, an ordinary (reverse, respectively) $r$-power satisfying $r\geq 1+1/\alpha$ is called an \emph{$\alpha$-gapped repeat} (\emph{$\alpha$-gapped palindrome}, respectively).  Algorithmic questions concerning the identification and enumeration of $\alpha$-gapped repeats and palindromes in a given word, along with some related questions, have recently received considerable attention; see~\cite{IKoppl2019, DuchonNicaudPivoteau2018, GawrychowskiManea2015, CrochemoreKolpakovKucherov2016} and the references therein.  Gapped repeats and palindromes are important in the context of DNA and RNA structures, and this has been the primary motivation for their study.}  
For example, the English words \tt{edited} and \tt{render} are undirected $\tfrac{3}{2}$-powers; \tt{edited} is an ordinary $\tfrac{3}{2}$-power, while \tt{render} is a reverse $r$-power.

We say that a word $w$ is \emph{undirected $\alpha$-free} if it is $\alpha$-free up to $\simeq$.  The definition of an \emph{undirected $\alpha^+$-free} word is analogous.  We let $\URT(k)=\RT_{\simeq}(k)$, and refer to this as the \emph{undirected repetition threshold} for $k$ letters.

It is clear that $\simeq$ is coarser than $=$ and finer than $\approx$.  Thus, for every rational $1<r\leq 2$, an $r$-power is an undirected $r$-power, and an undirected $r$-power is an Abelian $r$-power.  As a result, we immediately have
\[
\RT(k)\leq \URT(k)\leq \ART(k)
\]
for all $k\geq 2$.  

Since only a weak upper bound on $\ART(k)$ is currently known, we provide an alternate upper bound on $\URT(k)$ for large enough $k$.  For words $u=u_0u_1\cdots$ and $v=v_0v_1\cdots$ of the same length (possibly infinite) over alphabets $A$ and $B$, respectively, the \textit{direct product} of $u$ and $v,$ denoted $u\otimes v,$ is the word on alphabet $A\times B$ defined by
\[
u\otimes v=(u_0,v_0)(u_1,v_1)\cdots.
\]
A word $x$ is called a \emph{reversible factor} of $w$ if both $x$ and $\rev{x}$ are factors of $w$.

\begin{theorem}\label{123}
For every $k\geq 9$, we have $\URT(k)\leq \RT(\lfloor k/3\rfloor)$.
\end{theorem}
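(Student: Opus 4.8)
The plan is to obtain a word on $k$ letters as a direct product, using one factor to avoid ordinary powers and the other --- a fixed periodic word on $3$ letters --- to destroy all reverse powers. Set $m=\lfloor k/3\rfloor$; since $k\geq 9$ we have $m\geq 3$, so $\RT(m)$ is a genuine threshold ($\RT(m)\leq\tfrac74<2$), and by the resolution of Dejean's conjecture there is an infinite word $v$ over an $m$-letter alphabet $B$ that is $\RT(m)^+$-free. Let $u=(012)^{\omega}=012\,012\,012\cdots$ be the purely periodic word with period $012$ over $\{0,1,2\}$, and put $w=v\otimes u$, a word over the $3m$-letter alphabet $B\times\{0,1,2\}$. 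Since $3m=3\lfloor k/3\rfloor\leq k$, we may regard $w$ as a word on $k$ letters. I claim $w$ is undirected $\RT(m)^+$-free; as $w$ is then undirected $r$-free for every $r>\RT(m)$, this yields $\URT(k)\leq\RT(m)=\RT(\lfloor k/3\rfloor)$.

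It remains to verify the claim, and here the direct product lets us treat ordinary and reverse powers separately. Write $\pi_1,\pi_2$ for the projections of $w$ onto its two coordinates, so $\pi_1(w)=v$ and $\pi_2(w)=u$; projection preserves lengths and carries a factor to a factor. If $w$ contains an ordinary $r$-power $xyx$, then $\pi_1$ sends it to an ordinary $r$-power $\pi_1(x)\pi_1(y)\pi_1(x)$ in $v$ --- the same exponent, since lengths are preserved and the matching of the two copies of $x$ descends to $\pi_1$. As $v$ is $\RT(m)^+$-free, $r\leq\RT(m)$, so $w$ has no ordinary $r$-power with $r>\RT(m)$.

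For reverse powers, the point is that $u=(012)^{\omega}$ has no reversible factor of length at least $2$: its length-$2$ factors are exactly $01,12,20$, whose reversals $10,21,02$ are not factors of $u$, and the length-$2$ prefix of any reversible factor of length $\geq 2$ is itself reversible. Now suppose $w$ contains a reverse $r$-power $xyx^R$. If $|x|\geq 2$, then applying $\pi_2$ we find that both $\pi_2(x)$ and $\pi_2(x^R)=\rev{\pi_2(x)}$ are factors of $u$, so $\pi_2(x)$ is a reversible factor of $u$ of length $|x|\geq 2$, a contradiction. Hence $|x|=1$, and then $x^R=x$, so $xyx^R=xyx$ is actually an ordinary $r$-power, forcing $r\leq\RT(m)$ by the previous paragraph. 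Thus $w$ contains no undirected $r$-power with $r>\RT(m)$, completing the proof of the claim.

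The genuinely substantive ingredient is the one-line observation that $(012)^{\omega}$ has no reversible factor of length $\geq 2$ (the requisite ``mod $3$'' bookkeeping), combined with the fact that the direct product decouples the ordinary and reverse obstructions; the rest is definition chasing. The one place needing care is the degenerate case $|x|=1$ of a reverse power, but such a power is literally of the form $xyx$ and so is already covered by the Dejean input; one should also confirm that a single $\RT(m)^+$-free $v$ indeed produces one word witnessing $k$-avoidability of undirected $r$-powers simultaneously for all $r>\RT(m)$, so that the infimum defining $\URT(k)$ is bounded by $\RT(m)$.
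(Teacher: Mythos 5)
Your proposal is correct and is essentially the paper's own proof: take a Dejean-optimal word on $\lfloor k/3\rfloor$ letters, form the direct product with $(\tt{012})^{\omega}$, observe that the periodic component has no reversible factor of length $\geq 2$, so every reverse power degenerates to an ordinary one, which the Dejean component kills. Your write-up is just a more detailed version of the same argument (and incidentally corrects the paper's minor slip of writing $3\ell<k$ where $3\ell\leq k$ is what holds and what is needed).
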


\begin{proof}
Fix $k\geq 9$, and let $\ell=\lfloor k/3\rfloor$.  Evidently, we have $\ell\geq 3$ and thus $\RT(\ell)<2$.  Let $\bm{u}$ be an infinite $\RT(\ell)^+$-free word on $\ell$ letters.  We claim that the word $\bm{u}\otimes (\tt{123})^\omega$ on $3\ell<k$ letters is undirected $\RT(\ell)^+$-free, from which the theorem follows.  Since the only reversible factors of $(123)^\omega$ have length at most $1$, any reverse $r$-power $xy\rev{x}$ in $\bm{u}\otimes (\tt{123})^\omega$ satisfies $x=\rev{x}$, and hence is an ordinary $r$-power as well. Since $\bm{u}$ is ordinary $\RT(\ell)^+$-free, so is $\bm{u}\otimes(\tt{123})^\omega$.  This completes the proof of the claim.
\end{proof}

We now describe the layout of the remainder of the article.  In Section~\ref{Patterns}, we discuss related problems in pattern avoidance, and give some implications of our main results in that setting.  In Section~\ref{URT3}, we show that $\URT(3)=7/4$ using a standard morphic constuction.  In Section~\ref{Backtrack}, we demonstrate that $\URT(k)\geq (k-1)/(k-2)$ for all $k\geq 4$.  In Section~\ref{URT4}, we use a variation of the encoding introduced by Pansiot~\cite{Pansiot1984} to prove that $\URT(k)=(k-1)/(k-2)$ for $k\in\{4,8,12\}$.  In light of our results, we propose the following.

\begin{Conjecture}\label{conj}
For all $k\geq 4$, we have $\URT(k)=(k-1)/(k-2)$.
\end{Conjecture}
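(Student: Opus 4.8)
Since the lower bound $\URT(k)\geq (k-1)/(k-2)$ for all $k\geq 4$ is already in hand (Section~\ref{Backtrack}), proving Conjecture~\ref{conj} reduces to exhibiting, for each $k\geq 4$, an infinite word on $k$ letters that is undirected $\bigl((k-1)/(k-2)\bigr)^+$-free. The guiding heuristic is that $(k-1)/(k-2)$ is precisely Dejean's threshold for a $(k-1)$-letter alphabet (at least once $k-1\geq 5$), so one should picture having a single spare letter available to destroy the reverse powers that a Dejean-optimal word on $k-1$ letters might contain. The plan is therefore to push the Pansiot-style encoding already used in Section~\ref{URT4} for $k\in\{4,8,12\}$ to a uniform construction valid for all large $k$, and to settle the remaining small values of $k$ by direct search or ad hoc morphisms.

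Concretely I would proceed in three steps. \textbf{(1)} Extract the local structure of an undirected $\bigl((k-1)/(k-2)\bigr)^+$-free word: as in Pansiot's analysis, any two occurrences of equal short factors --- and now also of a factor and its reversal --- must be far apart, so beyond a bounded prefix the word is controlled by a sliding window that evolves by permutations, subject to an extra ``no short gapped palindrome'' restriction. \textbf{(2)} Encode such words over an auxiliary alphabet, a variant of Pansiot's encoding, chosen so that both the ordinary-repetition constraint and the reverse-power constraint become local, automaton-checkable conditions, and prove the lift: every infinite admissible encoded sequence comes from a genuine undirected $\bigl((k-1)/(k-2)\bigr)^+$-free word on $k$ letters. \textbf{(3)} Following Carpi's treatment of Dejean's conjecture for large alphabets, produce an explicit uniform morphism (or a small finite family of morphisms indexed by the residue of $k$ modulo a fixed integer) whose fixed point is an admissible encoding for all sufficiently large $k$, and dispatch the finitely many remaining cases --- at least $k\in\{5,6,7\}$, and perhaps a few more --- by backtracking together with the bounded-length-checking lemma.

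The principal obstacle is the reverse-power condition itself. Under any Pansiot-type encoding an ordinary repetition $xyx$ shows up as a repetition in the encoded sequence, but a reverse power $xy\rev{x}$ shows up instead as a palindrome-like pattern, so the two families of forbidden factors are governed by genuinely different combinatorial conditions that must be killed \emph{simultaneously} by one morphism, uniformly in $k$; the existing Dejean-conjecture machinery does not address this interaction. I would also expect the small cases to resist the uniform template, much as $k=4$ and $k=5$ are already exceptional for Dejean ($\RT(3)=\tfrac{7}{4}$ and $\RT(4)=\tfrac{7}{5}$ are not of the form $k/(k-1)$, so the ``reduce to Dejean on $k-1$ letters'' heuristic fails there), and indeed the value $\URT(4)=\tfrac{3}{2}$ proved in Section~\ref{URT4} sits outside the Dejean family entirely. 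As in every Dejean-type argument, the final and most computation-heavy step will be certifying undirected $\bigl((k-1)/(k-2)\bigr)^+$-freeness of the proposed fixed points, which one reduces to checking undirected powers up to a bounded length via the appropriate analogue of the standard ``short repetitions suffice'' lemma.
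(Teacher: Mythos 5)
You have correctly recognized that the statement is a conjecture, not a theorem; the paper itself supplies no proof of it. What the paper actually establishes is the lower bound $\URT(k)\geq (k-1)/(k-2)$ for all $k\geq 4$ (Theorem~\ref{LowerBoundBacktrack}) together with the matching upper bound only for $k\in\{4,8,12\}$ (Theorem~\ref{Main}), so there is no ``paper's own proof'' to compare your argument against, and your proposal is appropriately framed as a research plan rather than a proof.

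Your plan does track the paper's method for the proved cases quite closely: a Pansiot-style ternary encoding under which the letter-ranking evolves by permutations; a Moulin-Ollagnier/Currie--Rampersad kernel-repetition analysis to rule out long ordinary powers; and a separate device to rule out long reverse powers. The part your sketch does not pin down --- and this is exactly why the paper stops at three values of $k$ --- is what that separate device should be in general. In the paper, long reverse powers are excluded by Lemma~\ref{Alternating}: a reversible factor of length $k-1$ would force the encoding of the gap to map under $\sigma$ to a specific reversal permutation, which is \emph{odd} precisely when $k\equiv 0\pmod 4$, while the encoding alphabet $\{\tt{31},\tt{2}\}$ lands in the alternating group $A_k$. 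This parity clash is what bounds reversible factors, and it is a residue-class-specific trick with no obvious analogue for $k\not\equiv 0\pmod 4$; one would need a different encoding alphabet or a different group-theoretic invariant. Your proposal names the reverse-power constraint as ``the principal obstacle,'' which is right, but it does not identify a candidate mechanism that would work uniformly in $k$ (or even for a full residue class), so the conjecture remains genuinely open. A Carpi-style uniform-in-$k$ morphism is a reasonable eventual target, but the new ingredient the reverse powers demand is still missing.

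One small correction of emphasis: $\URT(4)=\tfrac{3}{2}=(4-1)/(4-2)$ \emph{is} of the conjectured form $(k-1)/(k-2)$; what is true, as you meant, is that $\tfrac{3}{2}$ does not equal $\RT(3)=\tfrac{7}{4}$, so the ``one spare letter over a Dejean word on $k-1$ letters'' heuristic only begins to line up numerically at $k\geq 6$.
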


We briefly place this conjecture in context. We know that $\RT(k)=k/(k-1)$ for all $k\geq 5,$ we conjecture that $\URT(k)=(k-1)/(k-2)$ for all $k\geq 4$, and Samsonov and Shur~\cite{SamsonovShur2012} conjecture that $\ART(k)=(k-1)/(k-2)$ for all $k\geq 5$.  Let us fix $k\geq 5$.  In~\cite{Shur2011}, Shur proposes splitting all exponents greater than $\RT(k)$ into levels as follows\footnote{Shur considers exponents belonging to the ``extended rationals''.  This set includes all rational numbers and all such numbers with a $+$, where $x^+$ covers $x$, and the inequalities $y\leq x$ and $y<x^+$ are equivalent.}:
\begin{center}
\begin{tabular}{c c c c}
$1$st level & $2$nd level & $3$rd level & $\dots$\\
$\left[\tfrac{k}{k-1}^+,\tfrac{k-1}{k-2}\right]$ & $\left[\tfrac{k-1}{k-2}^+,\tfrac{k-2}{k-3}\right]$ & $\left[\tfrac{k-2}{k-3}^+,\tfrac{k-3}{k-4}\right]$  & $\dots$
\end{tabular}
\end{center}
For $\alpha,\beta\in \left[\tfrac{k}{k-1}^+,\tfrac{k-3}{k-4}\right]$, Shur provides evidence that the language of $\alpha$-free $k$-ary words and the language of $\beta$-free $k$-ary words exhibit similar behaviour (e.g., with respect to growth) if $\alpha$ and $\beta$ are in the same level, and quite different behaviour otherwise; see~\cite{Shur2011,Shur2014}.  If the conjectured values of $\URT(k)$ and $\ART(k)$ are correct, then the undirected repetition threshold and the Abelian repetition threshold provide further evidence of the distinction between levels.

We now introduce some terminology that will be used in the sequel.  Let $A$ and $B$ be alphabets, and let $h\colon A^*\rightarrow B^*$ be a morphism.  Using the standard notation for images of sets, we have $h(A)=\{h(a)\colon\ a\in A\},$ which we refer to as the set of \emph{blocks} of $h$.  
A set of words $P\subseteq A^*$ is called a \emph{prefix code} if no element of $P$ is a prefix of another.
If $P$ is a prefix code and $w$ is a nonempty factor of some element of $P^+$, a \emph{cut} of $w$ over $P$ is a pair $(x,y)$ such that (i) $w=xy$; and (ii) for every pair of words $p,s$ with $pws\in P^+,$ we have $px\in P^*$.  We use vertical bars to denote cuts.  For example, over the prefix code $\{\tt{01},\tt{10}\},$ the word $\tt{11}$ has cut $\tt{1}\vert \tt{1}$.  The prefix code that we work over will always be the set of blocks of a given morphism, and should be clear from context.

\section{Related problems in pattern avoidance}\label{Patterns}


Let $p=p_1p_2\cdots p_n$ be a word over alphabet $V$, where the $p_i$ are letters called \emph{variables}.  In this context, the word $p$ is called a \emph{pattern}.  If $\sim$ is an equivalence relation on words, then we say that the word $w$ \emph{encounters $p$ up to $\sim$} if $w$ contains a factor of the form $X_1X_2\cdots X_n$, where each word $X_i$ is nonempty and $X_i\sim X_j$ whenever $p_i=p_j$.  Otherwise, we say that $w$ \emph{avoids $p$ up to $\sim$}.  A pattern $p$ is \emph{$k$-avoidable up to $\sim$} if there is an infinite word on a $k$-letter alphabet that avoids $p$ up to $\sim$.  Otherwise, the pattern $p$ is \emph{$k$-unavoidable up to $\sim$}.  Finally, the pattern $p$ is \emph{avoidable up to $\sim$} if it is $k$-avoidable for some $k$, and \emph{unavoidable up to $\sim$} otherwise.

When $\sim$ is equality, we recover the ordinary notion of \emph{pattern avoidance} (see~\cite{CassaigneChapter}).  When $\sim$ is $\approx$ (i.e., ``is an anagram of''), we recover the notion of \emph{Abelian pattern avoidance} (see~\cite{CurrieLinek2001,CurrieVisentin2008,Rosenfeld2016}, for example).  One could also explore pattern avoidance up to $\simeq$, or \emph{undirected pattern avoidance}. We discuss some initial results in this direction.  While there are patterns that are avoidable in the ordinary sense but not in the Abelian sense~\cite[Lemma 3]{CurrieLinek2001}, every avoidable pattern is in fact avoidable up to $\simeq$, as we show below.

\begin{theorem}\label{Avoidable}
Let $p$ be a pattern.  Then $p$ is avoidable in the ordinary sense if and only if $p$ is avoidable up to $\simeq$.
\end{theorem}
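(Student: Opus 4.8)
The plan is to prove the two directions separately. The easy direction is that avoidability up to $\simeq$ implies avoidability in the ordinary sense: if $p$ is $k$-unavoidable in the ordinary sense, then since every occurrence of $p$ in the ordinary sense is in particular an occurrence of $p$ up to $\simeq$ (taking $X_i=X_j$ whenever $p_i=p_j$ trivially satisfies $X_i\simeq X_j$), any infinite word avoiding $p$ up to $\simeq$ would also avoid $p$ in the ordinary sense; contrapositively, unavoidability in the ordinary sense gives unavoidability up to $\simeq$. So the substance is the forward direction: if $p$ is avoidable in the ordinary sense, then $p$ is avoidable up to $\simeq$.

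For the forward direction, the idea I would pursue is essentially the same direct-product trick used in the proof of Theorem~\ref{123}. Suppose $p$ is avoidable in the ordinary sense, say there is an infinite word $\bm{u}$ over a $k$-letter alphabet $A$ that avoids $p$ in the ordinary sense. I would form $\bm{w}=\bm{u}\otimes\bm{v}$ for a suitably chosen infinite word $\bm{v}$ over a second alphabet $B$, so that $\bm{w}$ is a word over $A\times B$, and argue that $\bm{w}$ avoids $p$ up to $\simeq$. The point of tensoring with $\bm{v}$ is to kill reversals: if a factor $X_1X_2\cdots X_n$ of $\bm{w}$ witnesses an occurrence of $p$ up to $\simeq$, then projecting to the $A$-coordinate gives words $\pi_A(X_i)$ that are all pairwise related by $=$ or $R$ when $p_i=p_j$, which is not immediately a contradiction; so instead I want to use the $B$-coordinate to force each $\pi_B(X_i)$ to equal $\pi_B(X_j)$ (not merely be its reversal) whenever $p_i=p_j$, and moreover to force any two occurring blocks that are reversals of each other in $\bm{w}$ to actually be equal. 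A clean way to do this: choose $\bm{v}$ so that its only reversible factors are the single letters — for instance $\bm{v}=\bm{u}'\otimes(\tt{123})^\omega$ won't literally work on a fixed alphabet, but one can instead take $\bm{v}$ itself to be an infinite word all of whose reversible factors have length $\le 1$, e.g. built over a three-letter alphabet as in Theorem~\ref{123}; actually the very word $(\tt{123})^\omega$ has this property. Then in $\bm{w}=\bm{u}\otimes(\tt{123})^\omega$, if $X_i\simeq X_j$ with $|X_i|=|X_j|\ge 2$ and $X_i\neq X_j$, the $\tt{123}$-coordinate would give a reversible factor of $(\tt{123})^\omega$ of length $\ge 2$, a contradiction; hence $X_i=X_j$ for all $i,j$ with $p_i=p_j$ provided all blocks have length $\ge 2$. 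The case of length-$1$ blocks needs separate care, and the alphabet of $\bm{w}$ is $3k$ letters, which is fine since avoidability does not fix the alphabet size.

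The main obstacle I anticipate is handling occurrences of $p$ up to $\simeq$ in which some of the blocks $X_i$ are single letters, since then $X_i=\rev{X_i}$ automatically and the reversal-killing argument via $(\tt{123})^\omega$ says nothing. The fix is that when $p_i=p_j$ and $|X_i|=|X_j|=1$ we automatically get $X_i\simeq X_j$ iff $X_i=X_j$ anyway, because a one-letter word equals its own reversal; so length-$1$ blocks are harmless — the relation $\simeq$ collapses to $=$ on them. More carefully: for \emph{any} pair $i,j$ with $p_i=p_j$, from $X_i\simeq X_j$ we get either $X_i=X_j$ or $X_i=\rev{X_j}$; in the latter case $|X_i|=|X_j|$ and, looking at the $(\tt{123})^\omega$-coordinate, $\pi(X_i)=\rev{\pi(X_j)}$ is a reversible factor of $(\tt{123})^\omega$, forcing $|\pi(X_i)|\le 1$, hence $|X_i|\le 1$, hence $X_i=\rev{X_j}=X_j$. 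So in all cases $X_i=X_j$, meaning $X_1X_2\cdots X_n$ is an occurrence of $p$ in the ordinary sense in $\bm{w}$. But $\bm{w}=\bm{u}\otimes(\tt{123})^\omega$ projects onto $\bm{u}$ in the first coordinate, so an ordinary occurrence of $p$ in $\bm{w}$ yields an ordinary occurrence of $p$ in $\bm{u}$ (as $\pi_A(X_i)=\pi_A(X_j)$ whenever $p_i=p_j$, and each $\pi_A(X_i)$ is nonempty), contradicting the choice of $\bm{u}$. Therefore $\bm{w}$ avoids $p$ up to $\simeq$, and since $\bm{w}$ is infinite over a $3k$-letter alphabet, $p$ is avoidable up to $\simeq$, completing the proof. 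I would write this up in a couple of short paragraphs, stating the reversible-factor property of $(\tt{123})^\omega$ as the key lemma-like observation (already implicit in Theorem~\ref{123}) and then running the projection argument.
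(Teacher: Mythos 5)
Your proposal is correct and takes exactly the same approach as the paper: the paper's proof is a one-liner stating that $\bm{u}\otimes(\tt{123})^\omega$ avoids $p$ up to $\simeq$ ``by an argument similar to the one used in Theorem~\ref{123},'' and you have simply spelled out that argument in full. The details you fill in (the reversible-factor property of $(\tt{123})^\omega$, the collapse of $\simeq$ to $=$ for length-one blocks, and the projection back to $\bm{u}$) are all correct and match the intended reasoning.
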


\begin{proof}
If $p$ is unavoidable in the ordinary sense, then clearly $p$ is unavoidable up to $\simeq.$  If $p$ is avoidable in the ordinary sense, then let $\bm{u}$ be an $\omega$-word avoiding $p$.  The direct product $\bm{u}\otimes (\tt{123})^\omega$ avoids $p$ up to $\simeq$ by an argument similar to the one used in Theorem~\ref{123}.
\end{proof}

Questions concerning the $k$-avoidability of patterns up to $\simeq$ appear to be more interesting.  The \emph{avoidability index} of a pattern $p$ up to $\sim$, denoted $\lambda_\sim(p)$, is the least positive integer $k$ such that $p$ is $k$-avoidable up to $\sim$, or $\infty$ if $p$ is unavoidable.  In general, for any pattern $p$, we have
\[
\lambda_{=}(p)\leq \lambda_{\simeq}(p)\leq \lambda_{\approx}(p).
\]
The construction of Theorem~\ref{Avoidable} can be used to show that $\lambda_{\simeq}(p)\leq 3\lambda_{=}(p)$, though we suspect that this bound is not tight.  

The study of the undirected repetition threshold will have immediate implications on avoiding patterns up to $\simeq$.  For example, we can easily resolve the avoidability index of unary patterns up to $\simeq$ using known results along with a result proven later in this article.

\begin{theorem}
$\lambda_\simeq(x^k)=\begin{cases}
3, &\text{if $k\in\{2,3\}$};\\
2, &\text{if $k\geq 4$}.
\end{cases}$
\end{theorem}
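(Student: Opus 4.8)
The plan is to establish the statement in the three cases $k=2$, $k=3$, and $k\ge 4$, observing that every inequality except $\lambda_{\simeq}(x^3)\ge 3$ is essentially immediate. For the lower bounds I would use the general inequality $\lambda_{=}(p)\le\lambda_{\simeq}(p)$ noted above, together with the classical facts that there are only finitely many square-free binary words while infinite square-free ternary words exist (so $\lambda_{=}(x^2)=3$) and that $a^\omega$ contains $a^k$ (so $\lambda_{=}(x^k)\ge 2$ for all $k$). These yield $\lambda_{\simeq}(x^2)\ge 3$ and $\lambda_{\simeq}(x^k)\ge 2$ for every $k\ge 2$, leaving only the claim that no infinite binary word avoids $x^3$ up to $\simeq$.

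For the upper bounds, note first that any occurrence $X_1X_2\cdots X_n$ of $x^n$ up to $\simeq$ contains the occurrence $X_1\cdots X_m$ of $x^m$ up to $\simeq$ for each $m\le n$; hence avoiding $x^m$ up to $\simeq$ implies avoiding $x^n$ up to $\simeq$, so $\lambda_{\simeq}(x^3)\le\lambda_{\simeq}(x^2)$ and $\lambda_{\simeq}(x^k)\le\lambda_{\simeq}(x^4)$ for $k\ge 4$. An occurrence of $x^2$ up to $\simeq$ is a square or an even palindrome, that is, an undirected $2$-power, so any ternary word that is undirected $(7/4)^+$-free---which exists by Section~\ref{URT3}---avoids $x^2$ up to $\simeq$; this gives $\lambda_{\simeq}(x^2)\le 3$ and hence $\lambda_{\simeq}(x^3)\le 3$. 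Finally, since a word and its reversal are anagrams, every occurrence of $x^4$ up to $\simeq$ is an abelian fourth power, so $\lambda_{\simeq}(x^4)\le\lambda_{\approx}(x^4)=2$ by the known $2$-avoidability of abelian fourth powers; therefore $\lambda_{\simeq}(x^k)=2$ for all $k\ge 4$.

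It remains to prove $\lambda_{\simeq}(x^3)\ge 3$, which is the only real work. Suppose, for contradiction, that $\bm w$ is an infinite binary word avoiding $x^3$ up to $\simeq$. As $\tt{aaa}$ and $\tt{bbb}$ are occurrences of $x^3$ up to $\simeq$, the word $\bm w$ decomposes into maximal runs of length $1$ or $2$; let $\rho\in\{1,2\}^\omega$ be the resulting sequence of run lengths. There are exactly eight words of the form $X_1X_2X_3$ with each $X_i\in\{\tt{ab},\tt{ba}\}$, each an occurrence of $x^3$ up to $\simeq$ and so absent from $\bm w$; inspecting the run structure of these eight words shows that, beyond an initial segment, $\rho$ avoids the factors $\tt{22}$, $\tt{112}$, and $\tt{1111}$ (for instance, an occurrence of $\tt{112}$ in $\rho$ with runs on either side produces in $\bm w$ a factor $L\overline L L\overline L\,\overline L L$, which is $\tt{ababba}$ or $\tt{babaab}$). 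Since any occurrence of $\tt{11}$ in such a tail would extend forward to $\tt{1111}$, the tail of $\rho$ contains neither $\tt{11}$ nor $\tt{22}$, hence is strictly alternating; but then $\bm w$ is eventually periodic and therefore contains an ordinary cube, contradicting the choice of $\bm w$. The main obstacle is precisely this last argument---in particular the faithful translation of forbidden factors of $\bm w$ into forbidden factors of $\rho$---and it could equally well be replaced by the routine computer verification that there are only finitely many binary words avoiding $x^3$ up to $\simeq$.
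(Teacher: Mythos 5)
Your proposal is correct, and it diverges from the paper's proof in two substantive ways, both confined to the $xxx$ case. For the upper bound $\lambda_{\simeq}(xxx)\le 3$, the paper appeals to Dekking's result that $\lambda_{\approx}(xxx)=3$; you instead observe that any occurrence of $x^n$ up to $\simeq$ contains one of $x^m$ up to $\simeq$ for $m\le n$, so $\lambda_{\simeq}(xxx)\le\lambda_{\simeq}(xx)\le 3$. This monotonicity remark is cleaner and avoids citing a second abelian result. For the lower bound $\lambda_{\simeq}(xxx)\ge 3$, the paper reports a computer backtrack (longest binary word avoiding undirected cubes has length $9$), whereas you give a hand argument: runs in such a binary word have length $\le 2$, the run-length sequence $\rho\in\{\tt{1},\tt{2}\}^\omega$ must (past its first position) avoid $\tt{22}$, $\tt{112}$, and $\tt{1111}$ because each, flanked by one letter on each side, yields one of the eight forbidden factors $X_1X_2X_3$ with $X_i\in\{\tt{ab},\tt{ba}\}$, and the resulting ban on $\tt{11}$ and $\tt{22}$ forces $\rho$ eventually alternating, hence $\bm{w}$ eventually periodic with period $3$, which contains a cube. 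The argument is sound (the only care needed, which you take, is having a run on the left when extracting the length-$6$ window). The remaining cases — $\lambda_{\simeq}(xx)=3$ via $\URT(3)=7/4$ plus the trivial lower bound, and $\lambda_{\simeq}(x^k)=2$ for $k\ge 4$ via Dekking's $\lambda_{\approx}(x^4)=2$ — match the paper. What the paper's approach buys is brevity and uniformity (one cited Dekking result for each upper bound, one computer check for the lower bound); what yours buys is a self-contained, machine-free proof of the $xxx$ lower bound and one fewer external citation.
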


\begin{proof}
We prove that $\URT(3)=7/4$ in Section~\ref{URT3}, from which it follows that $\lambda_{\simeq}(xx)=3$.  Backtracking by computer, one finds that the longest binary word avoiding $xxx$ in the undirected sense has length $9$, so $\lambda_{\simeq}(xxx)\geq 3$.  Since $\lambda_{\approx}(xxx)=3$~\cite{Dekking1979}, we conclude that $\lambda_{\simeq}(xxx)=3$.  Finally, since $\lambda_{\approx}(x^4)=2$~\cite{Dekking1979}, we have $\lambda_{\simeq}(x^k)=2$ for all $k\geq 4$.
\end{proof}

\noindent
We plan to determine the avoidability index of all binary patterns up to $\simeq$ in a future work.

%
%
%

Finally, we remark that the study of $k$-avoidability of patterns up to $\simeq$ has implications for $k$-avoidability of \emph{patterns with reversal} (see~\cite{CurrieLafrance2016,CurrieMolRampersad1,CurrieMolRampersad3} for definitions and examples).  In particular, if pattern $p$ is $k$-avoidable up to $\simeq$, then all patterns with reversal  that are obtained by swapping any number of letters in $p$ with their mirror images are \emph{simultaneously $k$-avoidable}; that is, there is an infinite word on $k$ letters avoiding all such ``decorations'' of $p$.


\section{$\URT(3)=\tfrac{7}{4}$}\label{URT3}

Dejean~\cite{Dejean1972} demonstrated that $\RT(3)=7/4$, and hence we must have $\URT(3)\geq 7/4$.  In order to show that $\URT(3)=7/4,$ it suffices to find an infinite ternary word that is undirected $\tfrac{7}{4}^+$-free.  We provide a morphic construction of such a word.  Let $f$ be the $24$-uniform morphism defined by
\begin{align*}
    \tt{0}&\mapsto \tt{012\,021\,201\,021\,012\,102\,120\,210}\\
    \tt{1}&\mapsto \tt{120\,102\,012\,102\,120\,210\,201\,021}\\
    \tt{2}&\mapsto \tt{201\,210\,120\,210\,201\,021\,012\,102}.
\end{align*}
The morphism $f$ is similar in structure to the morphism of Dejean~\cite{Dejean1972} whose fixed point avoids ordinary $7/4^+$-powers (but not undirected $7/4^+$-powers).  Note, in particular, that $f$ is ``symmetric'' in the sense of~\cite{Frid2001}. 

The following theorem was also verified by one of the anonymous reviewers using the automatic theorem proving software \tt{Walnut}~\cite{Walnut}.

\begin{theorem}
The word $f^\omega(\tt{0})$ is undirected $\tfrac{7}{4}^+$-free.
\end{theorem}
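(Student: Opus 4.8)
The plan is the standard desubstitution argument for fixed points of uniform morphisms, carried out separately for ordinary powers and for reverse powers, with the reverse case being where the real difficulty lies. Write $\bm{w}=f^\omega(\tt{0})$. Since $f$ is $24$-uniform and its three \emph{blocks} $f(\tt{0}),f(\tt{1}),f(\tt{2})$ are pairwise distinct, they form a prefix code, and because $\bm{w}=f(\bm{w})$ the block boundaries of $\bm{w}$ are exactly the multiples of $24$. First I would establish several structural facts, each by inspection or a short finite computation: the three blocks have pairwise distinct first letters and pairwise distinct last letters (so a nonempty proper prefix or suffix of a block determines the block); $f$ is \emph{synchronizing}, meaning every sufficiently long factor of $\bm{w}$ has a unique cut structure over $\{f(\tt{0}),f(\tt{1}),f(\tt{2})\}$ and hence occurs in $\bm{w}$ only at positions lying in one residue class modulo $24$; $f$ maps square-free words to square-free words, so $\bm{w}$ is square-free; $f$ commutes with the cyclic permutation $\sigma=(\tt{0}\,\tt{1}\,\tt{2})$, so (as $f(\tt{0})$ contains all three letters) $\bm{w}$, $f^\omega(\tt{1})$ and $f^\omega(\tt{2})$ have a common $\sigma$-invariant factor set; and no reversed block $\rev{f(a)}$ equals a block --- indeed, no concatenation of a reversed prefix of one block with a reversed suffix of another, with lengths summing to $24$, equals a block (a finite check over the finitely many blocks and overlap positions).

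Next I would argue by minimal counterexample. Suppose $\bm{w}$ contains an undirected $\tfrac{7}{4}^+$-power and fix one of minimum length, $W=xyx'$ with $x'\in\{x,\rev{x}\}$ and $|W|/|xy|>\tfrac{7}{4}$; then $|x|\le|xy|<\tfrac{4}{3}|x|$. Since $\bm{w}$ is square-free, $W$ has no factor $aa$, and in the reverse case $x$ is not a palindrome (else $W=xyx$ is an ordinary power). Fix $B$ so that every factor of $\bm{w}$ of length at most $B$ occurs in $f^{3}(\tt{0})$; if $|W|\le B$ a finite computation gives a contradiction, so I may assume $|W|$, and hence $|x|$, exceeds all the finitely many constants appearing below, in particular the synchronization constant.

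Suppose first that $x'=x$. This is the classical Dejean-type case, for a morphism deliberately built to resemble Dejean's, whose fixed point is ordinary $\tfrac{7}{4}^+$-free but not undirected $\tfrac{7}{4}^+$-free. The long factor $x$ occurs twice at distance $|xy|$, so by synchronization $24\mid|xy|$ and the block boundaries inside the two copies of $x$ sit at equal local positions; passing to the block-aligned core $W_0=f(V_0)$ of $W$ then gives a factor $V_0$ of $\bm{w}$ of length about $|W|/24<|W|$ with period $|xy|/24$. A length estimate --- the one place the exact value $\tfrac{7}{4}$ is used, with the near-threshold subcases needing a small separate argument --- shows that the exponent of $V_0$ still exceeds $\tfrac{7}{4}$, contradicting minimality. (If the gap $y$ is empty, $W$ is already a square, contradicting square-freeness.)

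The case $x'=\rev{x}$ is the main obstacle. Here both $x$ and $\rev{x}$ occur in $\bm{w}$, so $x$ carries both the block decomposition $x=s_1 f(X) p_1$ coming from its own occurrence and, on reversing the decomposition $\rev{x}=s_2 f(U) p_2$ of $\rev{x}$, a description $x=\rev{p_2}\,\rev{f(U)}\,\rev{s_2}$ as a concatenation of reversed blocks. Because $|X|$ and $|U|$ differ by at most one, these two structures on $x$ agree on a window of length $|x|$ minus a bounded constant, so on that long window a concatenation of blocks coincides with a concatenation of reversed blocks; if the two block-griddings line up there then some block equals a reversed block, and if they are offset then some block equals a concatenation of a reversed prefix of one block with a reversed suffix of another (lengths summing to $24$) --- both impossible by the facts above, which yields the required contradiction. (Should some of those finite checks fail to exclude every offset, one would fall back on the closeness $|xy|<\tfrac{4}{3}|x|$ together with the $\sigma$-invariance of the factor set to extract a strictly shorter undirected $\tfrac{7}{4}^+$-power, possibly of the twisted form $\xi\,Z\,\sigma^{j}(\rev{\xi})$, which $\sigma$-invariance converts into an honest one.) The substantive work, and where I expect the effort to concentrate, is verifying synchronization and the block versus reversed-block checks for this particular $24$-uniform morphism, the length estimate in the ordinary case, and the bookkeeping of how block boundaries transform under the reversal $x\mapsto\rev{x}$. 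As an independent confirmation of all these computations, the statement can be re-verified with \tt{Walnut}, as an anonymous reviewer did.
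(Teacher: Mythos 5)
Your proposal is correct and follows the same overall architecture as the paper: bound the length of reversible factors of $\bm{w}=f^\omega(\tt{0})$ so that any reverse $\tfrac{7}{4}^+$-power has bounded length and can be excluded by a finite check on a prefix, then handle ordinary $\tfrac{7}{4}^+$-powers by a minimal-counterexample desubstitution argument exploiting synchronization of the $24$-uniform morphism. The ordinary case you describe is essentially the paper's (the paper locates cuts concretely via one of $\tt{01020}$, $\tt{12101}$, $\tt{20212}$ inside $x$, then splits on whether the gap $y$ lies within a single block -- yielding a square in the preimage -- or spans a cut -- yielding a shorter $\tfrac{7}{4}^+$-power). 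Where you genuinely diverge is in bounding reversible factors. The paper simply checks all factors of length $19$ of $\bm{w}$ and observes none is reversible, whence $|x|\le 18$, $|y|\le 5$, and $|xy\rev{x}|\le 41$, settled by inspecting $f^3(\tt{0})$. You instead argue structurally: a long reversible $x$ carries two block-griddings (its own, and the reversal of the gridding of $\rev{x}$), forcing some full block $f(a)$ lying in the overlap window to coincide with a fragment of a concatenation of reversed blocks, which you exclude by a finite block-versus-reversed-block-overlap check. This is sound -- indeed any such coincidence would itself produce a reversible factor of length $24$, so the paper's length-$19$ computation already guarantees your structural checks succeed -- but it only kicks in once the overlap window (length roughly $|x|-92$) is long enough to contain a full block, so the resulting bound on $|x|$ (on the order of $140$) and hence on $|xy\rev{x}|$ is noticeably coarser than the paper's; the direct check on length-$19$ factors is more economical. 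The fall-back you sketch in parentheses (using $\sigma$-invariance to repair a failed offset check via twisted powers) is the one genuinely vague step, but it is not needed since the primary finite checks do go through.
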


\begin{proof}
We first show that $f^\omega(\tt{0})$ has no factors of the form $xyx^R$ with $|x|>3|y|$ (which is equivalent to $|xy\rev{x}|/|xy|>7/4$).  By exhaustively checking all factors of length $19$ of $f^\omega(0)$, we find that $f^\omega(\tt{0})$ has no reversible factors of length greater than $18$.  So if $f^\omega(\tt{0})$ has a factor of the form $xyx^R$ with $|x|>3|y|$, then $|x|\leq 18$, and in turn $|y|<6$.  So $|xyx^R|<42$.  Every factor of length at most $41$ appears in $f^3(\tt{0})$, so by checking this prefix exhaustively we conclude that $f^\omega(\tt{0})$ has no factors of this form.

So it suffices to show that $f^\omega(\tt{0})$ is (ordinary) $7/4^+$-free.  Suppose towards a contradiction that $f^\omega(\tt{0})$ has factor $xyx$ with $|x|>3|y|$.  Let $n$ be the smallest number such that a factor of this form appears in $f^n(\tt{0})$.  By exhaustive check, we have $n>3$.  First of all, if $|x|\leq 27$, then $|xyx|<63$.  Every factor of $f^\omega(\tt{0})$ of length at most $62$ appears in $f^3(\tt{0})$, so we may assume that $|x|\geq 28$.  Then $x$ contains at least one of the factors $\tt{01020}$, $\tt{12101}$, or $\tt{20212}.$  By inspection, each one of these factors determines a cut in $x$ (over the prefix code $\{f(\tt{0}),f(\tt{1}),f(\tt{2})\}$), say $x=s_x\vert x'\vert p_x$, where $s_x$ is a possibly empty proper suffix of a block of $f$, and $p_x$ is a possibly empty proper prefix of a block of $f$.  If $y$ is properly contained in a single block, then 
\[
xyx=s_x\vert x'\vert p_xys_x\vert x'\vert p_x.
\]
In this case, one verifies that the preimage of $xyx$ contains a square, which contradicts the minimality of $n$.  Otherwise, if $y$ is not properly contained in a single block of $f$, then $y=s_y\vert y'\vert p_y$, where $s_y$ is a possibly empty proper suffix of a block, and $p_y$ is a possibly empty proper prefix of a block.  Then
\[
xyx=s_x\vert x'\vert p_xs_y\vert y'\vert p_ys_x\vert x'\vert p_x,
\]
which appears internally as
\[
\vert p_ys_x\vert x'\vert p_xs_y\vert y'\vert p_ys_x\vert x'\vert p_xs_y\vert.
\]
The preimage of this factor is $ax_1by_1ax_1b,$ where $f(a)=p_ys_x$, $f(b)=p_xs_y$, $f(x_1)=x'$, and $f(y_1)=y'.$  Then $19|ax_1b|\geq |x|>3|y|\geq 3\cdot 19|y_1|,$ or equivalently $|ax_1b|>3|y_1|,$ which contradicts the minimality of $n$.
\end{proof}

\noindent
Thus, we conclude that $\URT(3)=\RT(3)=\frac{7}{4}$.  We will see in the next section that $\URT(k)$ is strictly greater than $\RT(k)$ for every $k\geq 4$.  

\section{A lower bound on $\URT(k)$ for $k\geq 4$}\label{Backtrack}

Here, we prove that $\URT(k)\geq (k-1)/(k-2)$ for $k\geq 4$.

\begin{theorem}\label{LowerBoundBacktrack}
If $k\geq 4$, then $\URT(k)\geq \tfrac{k-1}{k-2}$, and the longest $k$-ary word that is undirected $(k-1)/(k-2)$-free has length $k+3$.
\end{theorem}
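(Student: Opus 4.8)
The plan is to establish the two halves of the statement separately: (a) there is a $k$-ary word of length $k+3$ that is undirected $(k-1)/(k-2)$-free, and (b) no $k$-ary word of length $k+4$ is. Together these show $k+3$ is the maximal length, and (b) already yields $\URT(k)\geq (k-1)/(k-2)$, since an infinite undirected $r$-free $k$-ary word with $r<(k-1)/(k-2)$ would in particular be undirected $(k-1)/(k-2)$-free, contradicting (b). I use throughout that for a word $xyx'$ one has $\tfrac{|xyx'|}{|xy|}\geq\tfrac{k-1}{k-2}$ if and only if $|y|\leq (k-3)|x|$; thus a word fails to be undirected $(k-1)/(k-2)$-free precisely when it contains a factor $xyx'$ with $x$ nonempty, $x'\in\{x,\rev{x}\}$ and $|y|\leq (k-3)|x|$. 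Call such a factor \emph{forbidden}.

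For (a), I would first reduce the verification to an easy finite check when $k\geq 5$. If $w$ has a forbidden factor $xyx'$ with $|x|\geq 2$, then (inspecting the length‑$2$ prefix of $x$ when $x'=x$, and the length‑$2$ suffix of $x$ together with its reverse when $x'=\rev{x}$, and using that a factor $aa$ is itself forbidden) $w$ must repeat some length‑$2$ factor or else contain some length‑$2$ factor together with its reverse. For $k\geq 5$ a word of length $k+3$ is short enough that, conversely, any such length‑$2$ coincidence already produces a forbidden factor; and for $|x|=1$ the forbidden factors are exactly the factors $awa$ with $|w|\leq k-3$, i.e. repetitions of one letter at distance at most $k-2$. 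Hence for $k\geq 5$ it suffices to exhibit a word of length $k+3$ in which (i) equal letters lie at distance at least $k-1$, and (ii) no length‑$2$ factor is repeated and none occurs with its reverse; one can take
\[
w_k = 1\,2\,4\,3\,5\,6\,\cdots\,(k-1)\,1\,k\,2\,3
\]
(the block $5\,6\,\cdots\,(k-1)$ empty when $k\leq 5$), in which $1,2,3$ occur twice, at positions $\{1,k\}$, $\{2,k+2\}$, $\{4,k+3\}$, and all other letters once; conditions (i),(ii) are then checked directly. The case $k=4$ is disposed of by inspecting the explicit word $\tt{1241321}$.

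The substance of the theorem is (b). Suppose $w$ is a $k$-ary word of length $k+4$ with no forbidden factor. Since no factor $awa$ with $|w|\leq k-3$ occurs, every window of $k-1$ consecutive letters of $w$ consists of $k-1$ distinct letters; in particular $w$ uses all $k$ letters (using only $k-1$ would make every such window omit the same letter, a case settled by the run argument below). For $1\leq i\leq 6$, let $m_i$ be the unique letter missing from $w[i..i+k-2]$. Comparing the $i$-th window with the $(i{+}1)$-st gives the transition rule: for $i\in\{1,\dots,5\}$, either $m_{i+1}=m_i$, and then $w[i]=w[i+k-1]$; or $m_{i+1}\neq m_i$, and then $w[i]=m_{i+1}$ and $w[i+k-1]=m_i$. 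Two consecutive ``constant'' steps give a repeated length‑$2$ factor at distance $k-1$, hence a forbidden factor $xyx$ with $|x|=2$ of ratio $\tfrac{k+1}{k-1}\geq\tfrac{k-1}{k-2}$; three consecutive ``change'' steps give a repeated length‑$2$ factor at distance $k$, hence a forbidden factor of ratio $\tfrac{k+2}{k}\geq\tfrac{k-1}{k-2}$. So the pattern of the five steps has no two consecutive constant steps and no three consecutive change steps, leaving exactly seven patterns. For each one, the transition rule forces an index $i\in\{1,2,3\}$ with $w[i]=w[i+k+1]$ and $w[i+1]=w[i+k]$; then $w[i..i+k+1]=x\,y\,\rev{x}$ with $x=w[i]\,w[i+1]$ and $|y|=k-2$, again a forbidden factor of ratio $\tfrac{k+2}{k}$. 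This contradiction proves (b). The argument is uniform in $k\geq 4$; in particular it is unaffected by a letter occurring three times, which can happen only for $k\in\{4,5\}$.

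The hard part is (b), and within it the verification — for each of the seven step-patterns — that the transition rule forces the two coincidences $w[i]=w[i+k+1]$ and $w[i+1]=w[i+k]$; this is where careful bookkeeping is needed, alongside the routine boundary checks that each extracted factor genuinely has ratio at least $(k-1)/(k-2)$ and fits inside a word of length $k+4$. Once the reduction to conditions (i) and (ii) is in place, part (a) is entirely routine.
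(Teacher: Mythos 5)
Your proof is correct, and for the substantive half (no undirected $(k-1)/(k-2)$-free word of length $k+4$) it takes a genuinely different route from the paper. The paper dispatches $k\in\{4,5\}$ by a computer backtracking search and then gives a hand argument only for $k\geq 6$: after normalizing (via the same observation that every length-$(k-1)$ window has $k-1$ distinct letters) so that $w$ begins $\tt{12}\cdots\tt{(k-1)}$, it explicitly grows the word letter by letter in a binary tree and shows all eleven leaves of depth four beyond the prefix hit a forbidden power; the explicit letters $\tt{4}$ and $\tt{5}$ appearing in some leaf-factors are why that argument needs $k\geq 6$. Your argument instead abstracts away the letters: you track only the ``missing letter'' $m_i$ of each of the six length-$(k-1)$ windows, classify each of the five inter-window steps as constant or change, rule out two consecutive constant steps and three consecutive change steps by the two ordinary-power ratios $(k+1)/(k-1)$ and $(k+2)/k$, and then observe that every one of the seven surviving step-patterns must contain a block $GKG$ with the $K$ at step $j\in\{2,3,4\}$, which forces $w[j-1]=w[j+k]$ and $w[j]=w[j+k-1]$ and hence a reverse $(k+2)/k$-power $w[j-1..j+k]=xy\rev{x}$ with $|x|=2$. (You can in fact replace the seven-case check by the one-line observation that steps $2,3,4$ cannot all be changes, and any $K$ among them has change neighbours.) This buys uniformity over all $k\geq 4$ — the two ratio inequalities hold whenever $k\geq 4$ — so no separate computer verification is needed for $k\in\{4,5\}$, and no WLOG relabelling is required. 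Your part (a) is also fine: the paper leaves the existence of a length-$(k+3)$ witness implicit in its tree (and in the computer check for small $k$), whereas you give the explicit word $\tt{1\,2\,4\,3\,5\cdots(k-1)\,1\,k\,2\,3}$ together with the clean sufficient criterion (equal letters at distance $\geq k-1$; no length-$2$ factor repeated or appearing with its reverse), and handle $k=4$ by the explicit word $\tt{1241321}$.
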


\begin{proof}
For $k\in\{4,5\}$, the statement is checked by a standard backtracking algorithm, which we performed by computer.  We now provide a general backtracking argument for all $k\geq 6$.

Fix $k\ge 6$, and suppose that $w$ is a $k$-ary word of length $k+4$ that is undirected $(k-1)/(k-2)$-free. It follows that at least $k-2$ letters must appear between any two repeated occurrences of the same letter in $w$, so that any length $k-1$ factor of $w$ must contain $k-1$ distinct letters.  So we may assume that $w$ has prefix $\tt{12}\cdots\tt{(k-1)}$.  Further, given any prefix $u$ of $w$ of length at least $k-1$, there are only two possibilities for the next letter in $w$, as it must be distinct from the $k-2$ distinct letters preceding it.  These possibilities are enumerated in the tree of Figure~\ref{BacktrackingTree}.

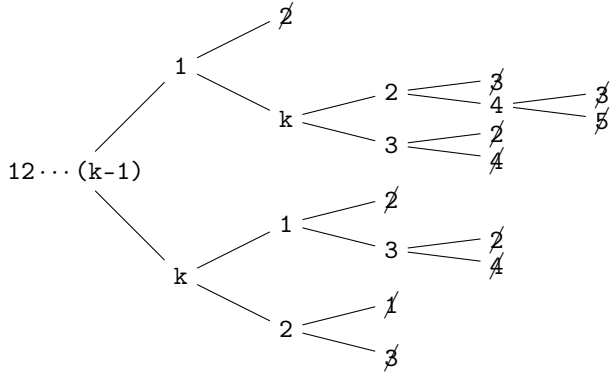
\begin{figure}
    \centering
\tikzstyle{level 1}=[level distance=2cm, sibling distance=4cm]
\tikzstyle{level 2}=[level distance=2cm, sibling distance=2cm]
\tikzstyle{level 3}=[level distance=2cm, sibling distance=1cm]
\tikzstyle{level 4}=[level distance=2cm, sibling distance=0.5cm]
\tikzstyle{level 5}=[level distance=2cm, sibling distance=0.5cm]

\tikzstyle{start} = []
\tikzstyle{letter} = []

\begin{tikzpicture}[grow=right, sloped,scale=0.7]
\node[start,left] {$\tt{12}\cdots\tt{(k-1)}$}
    child {
        node[letter] {\tt{k}}        
            child {
                node[letter] {\tt{2}}
                child {
                    node[letter] {\cancel{\tt{3}}}
                    edge from parent
                }
                child {
                    node[letter] {\cancel{\tt{1}}}
                    edge from parent
                }
                edge from parent
            }
            child {
                node[letter] {\tt{1}}
                child {
                    node[letter] {\tt{3}}
                    child {
                        node[letter] {\cancel{\tt{4}}}
                        edge from parent
                    }
                    child {
                        node[letter] {\cancel{\tt{2}}}
                        edge from parent
                    }
                    edge from parent
                }
                child {
                    node[letter] {\cancel{\tt{2}}}
                    edge from parent
                }
                edge from parent
            }
            edge from parent 
    }
    child {
        node[letter] {\tt{1}}        
        child {
                node[letter] {\tt{k}}
                child {
                    node[letter] {\tt{3}}
                    child {
                        node[letter] {\cancel{\tt{4}}}
                        edge from parent
                    }
                    child {
                        node[letter] {\cancel{\tt{2}}}
                        edge from parent
                    }
                    edge from parent
                }
                child {
                    node[letter] {\tt{2}}
                    child {
                        node[letter] {\tt{4}}
                        child {
                            node[letter] {\cancel{\tt{5}}}
                            edge from parent
                        }
                        child {
                            node[letter] {\cancel{\tt{3}}}
                            edge from parent
                        }
                        edge from parent
                    }
                    child {
                        node[letter] {\cancel{\tt{3}}}
                        edge from parent
                    }
                    edge from parent
                }
                edge from parent
            }
            child {
                node[letter] {\cancel{\tt{2}}}
                edge from parent
            }
        edge from parent         
    };
\end{tikzpicture}
    \caption{The tree of undirected $(k-1)/(k-2)$-power free words on $k$ letters.}
    \label{BacktrackingTree}
\end{figure}

We now explain why each word corresponding to a leaf of the tree contains an undirected $r$-power for some $r\geq (k-1)/(k-2)$.  We examine the leaves from top to bottom, and use the fact that $(k+1)/(k-1)>(k+2)/k>(k-1)/(k-2)$ when $k\geq 6$.
\begin{itemize}
\item The factor $\tt{12}\cdots\tt{(k-1)12}$ is an ordinary $(k+1)/(k-1)$-power.  
\item The factor $\tt{23}\cdots\tt{(k-1)1k23}$ is an ordinary  $(k+2)/k$-power.  
\item The factor $\tt{34}\cdots\tt{(k-1)1k243}$ is a reverse $(k+2)/k$-power.
\item The factor $\tt{45}\cdots\tt{(k-1)1k245}$ is an ordinary  $(k+1)/(k-1)$-power.
\item The factor $\tt{23}\cdots\tt{(k-1)1k32}$ is an ordinary $(k+2)/k$-power.
\item The factor $\tt{34}\cdots\tt{(k-1)1k34}$ is an ordinary $(k+1)/(k-1)$-power.
\item The factor $\tt{12}\cdots\tt{(k-1)k12}$ is an ordinary  $(k+2)/k$-power.
\item The factor $\tt{23}\cdots\tt{(k-1)k132}$ is a reverse $(k+2)/k$-power.
\item The factor $\tt{34}\cdots\tt{(k-1)k134}$ is an ordinary $(k+1)/(k-1)$-power.
\item The factor $\tt{12}\cdots\tt{(k-1)k21}$ is a reverse $(k+2)/k$-power.
\item The factor $\tt{23}\cdots\tt{(k-1)k23}$ is an ordinary $(k+1)/(k-1)$-power. \qedhere
\end{itemize}
\end{proof}

Conjecture~\ref{conj} proposes that the value of $\URT(k)$ matches the lower bound of Theorem~\ref{LowerBoundBacktrack} for all $k\geq 4$.  In the next section, we confirm Conjecture~\ref{conj} for several values of $k$.

\section{$\URT(k)=\tfrac{k-1}{k-2}$ for $k\in\{4,8,12\}$}\label{URT4}

First we explain why we rely on a different technique than in Section~\ref{URT3}.  Fix $k\geq 4$, and let $\Sigma_k=\{\tt{1},\tt{2},\dots,\tt{k}\}$. A morphism $h:A^*\rightarrow B^*$ is called $\alpha$-free ($\alpha^+$-free, respectively) if it maps every $\alpha$-free ($\alpha^+$-free, respectively) word in $A^*$ to an $\alpha$-free ($\alpha^+$-free, respectively) word in $B^*$.  The morphism $h$ is called \emph{growing} if $h(a)>1$ for all $a\in A^*$.  Brandenburg~\cite{Brandenburg1983} demonstrated that for every $k\geq 4$, there is no growing $\RT(k)^+$-free morphism from $\Sigma_k^*$ to $\Sigma_k^*$.  By a minor modification of his proof, one can show that there is no growing $(k-1)/(k-2)^+$-free morphism from $\Sigma_k^*$ to $\Sigma_k^*$.  While this does not entirely rule out the possibility that there is a morphism from $\Sigma_k^*$ to $\Sigma_k^*$ whose fixed point is $(k-1)/(k-2)^+$-free, it suggests that different techniques may be required.  Our technique relies on an encoding similar to the one introduced by Pansiot~\cite{Pansiot1984} in showing that $\RT(4)=7/5$.  Pansiot's encoding was later used in all subsequent work on Dejean's Conjecture.

\subsection{A ternary encoding}

We first describe an alternate definition of ordinary $r$-powers which will be useful in this section.  A word $w=w_1\cdots w_n$, where the $w_i$ are letters, is \emph{periodic} if for some positive integer $q$, we have $w_{i+q}=w_i$ for all $1\leq i\leq n-q$.  In this case, the integer $q$ is called a \emph{period} of $w$.  The \emph{exponent} of $w$, denoted $\exp(w),$ is the ratio between its length and its minimal period.  If $r=\exp(w)$, then $w$ is an \emph{$r$-power}.\footnote{If $r\leq 2$, then $w$ is an $r$-power as we have defined it in Section~\ref{Intro}. If $r>2$, then we take this as the definition of an (ordinary) $r$-power. For example, the English word \tt{alfalfa} has minimal period $3$ and exponent $\tfrac{7}{3},$ so it is a $\tfrac{7}{3}$-power.}  We can write any $r$-power $w$ as $w=pe$, where $|pe|/|p|=r$ and $e$ is a prefix of $pe$.  In this case, we say that $e$ is the \emph{excess} of the $r$-power $w$.

Suppose that $w\in \Sigma_k^*$ is an undirected $(k-1)/(k-2)^+$-free word that contains at least $k-1$ distinct letters.  Write $w=w_1w_2\cdots w_n$ with $w_i\in \Sigma_k$.  Certainly, every length $k-2$ factor of $w$ contains $k-2$ distinct letters, and it is easily checked that every length $k$ factor of $w$ contains at least $k-1$ distinct letters.

Now let $w\in \Sigma_k^*$ be any word containing at least $k-1$ distinct letters and satisfying these two properties:
\begin{itemize}
    \item Every length $k-2$ factor of $w$ contains $k-2$ distinct letters; and
    \item Every length $k$ factor of $w$ contains at least $k-1$ distinct letters.
\end{itemize}
Let $u$ be the shortest prefix of $w$ containing $k-1$ distinct letters.  We see immediately that $u$ has length $k-1$ or $k$.  Write $w=uv$, where $v=v_1v_2\cdots v_{n}$ with $v_i\in \Sigma_k$.  Define $p_0=u$ and $p_i=uv_1\cdots v_i$ for all $i\in\{1,\dots,n\}$.  For all $i\in\{0,1,\dots,n\}$, the prefix $p_i$ determines a permutation
\[
r_i=\begin{pmatrix}
\tt{1} & \tt{2} & \dots & \tt{k}\\ 
r_i[\tt{1}] & r_i[\tt{2}] & \dots & r_i[\tt{k}]
\end{pmatrix},
\]
of the letters of $\Sigma_k$, which ranks the letters of $\Sigma_k$ by the index of their final appearance in $p_i$.  In other words, the word $r_i[\tt{3}] \cdots r_i[\tt{k}]$ is the length $k-2$ suffix of $p_i$, and of the two letters in $\Sigma_k\backslash\{r_i[3],\dots,r_i[k]\}$, the letter $r_i[\tt{2}]$ is the one that appears last in $p_i$. Note that the final letter $r_i[\tt{1}]$ may not even appear in $p_i$.  For example, on $\Sigma_6,$ the prefix \tt{123416} gives rise to the permutation
\[
\begin{pmatrix}
\tt{1} & \tt{2} & \tt{3} & \tt{4} & \tt{5} & \tt{6}\\
\tt{5} & \tt{2} & \tt{3} & \tt{4} & \tt{1} & \tt{6}
\end{pmatrix}.
\]
Since every factor of length $k-2$ in $w$ contains $k-2$ distinct letters, for any $i\in\{1,\dots,n\}$, the letter $v_i$ must belong to the set $\{r_{i-1}[1],r_{i-1}[2],r_{i-1}[3]\}.$  This allows us to encode the word $w$ over a ternary alphabet, as described explicitly below.

For $1\leq i\leq n$, define $t(w)=t_1\cdots t_{n}$, where for all $1\leq i\leq n$, we have
\[
t_i=\begin{cases}
\tt{1}, & \text{if $w_i=r_{i-1}[1]$};\\
\tt{2}, & \text{if $w_i=r_{i-1}[2]$};\\
\tt{3}, & \text{if $w_i=r_{i-1}[3]$}.
\end{cases}
\]

\noindent
For example, on $\Sigma_5$, for the word $w=\tt{12342541243},$
the shortest prefix containing $4$ distinct letters is $1234$, and $w$ has encoding $t(w)=\tt{3131231}.$
Given the shortest prefix of $w$ containing $k-1$ distinct letters, and the encoding $t(w)$, we can recover $w$.  Moreover, if $w$ has period $q<n$, then so does $t(w)$.  The exponent $|w|/q$ of $w$ corresponds to an exponent $|v|/q$ of $t(w)$.

Let $S_k$ denote the symmetric group on $\Sigma_k$ with left multiplication.  Define a morphism $\sigma:\Sigma_3^*\rightarrow S_k$ by
\begin{align*}
\sigma(1)&=\begin{pmatrix}
    1 & 2 & 3 & 4 & \dots & k-1 & k \\
    2 & 3 & 4 & 5 & \dots & k & 1
  \end{pmatrix}\\
\sigma(2)&=\begin{pmatrix}
    1 & 2 & 3 & 4 & \dots & k-1 & k \\
    1 & 3 & 4 & 5 & \dots & k & 2
  \end{pmatrix}\\
\sigma(3)&=\begin{pmatrix}
    1 & 2 & 3 & 4 & \dots & k-1 & k \\
    1 & 2 & 4 & 5 & \dots & k & 3
  \end{pmatrix}.
\end{align*}
One proves by induction that $r_0\sigma(t(p_i))=r_i$.  It follows that if $w=pe$ has period $|p|$, and $e$ contains at least $k-1$ distinct letters, then the length $|p|$ prefix of $t(w)$ lies in the kernel of $\sigma$.  In this case, the word $t(w)$ is called a \emph{kernel repetition}. For example, over $\Sigma_4$, the word
\[
w=\tt{123243414212324}
\]
has period $10$, and excess $\tt{12324}$.  Hence, the encoding $t(w)=\tt{312313123131}$ is a kernel repetition; one verifies that $\sigma(\tt{3123131231})=\mbox{id}$.

Suppose that $k$ is even.  Then $\sigma(1)$ and $\sigma(3)$ are odd, while $\sigma(2)$ is even.  It follows that $\sigma(31)$ is even, and hence the subgroup of $S_k$ generated by $\sigma(2)$ and $\sigma(31)$ is a subgroup of the alternating group $A_k$.  This simple observation leads to the following important lemma, which will be used to bound the length of reversible factors in the words we construct.

\begin{Lemma}\label{Alternating}
Let $k\geq 4$ satisfy $k\equiv 0\pmod{4}$.  Let $w\in \Sigma_k^*$ be a word with prefix $\tt{12}\cdots\tt{(k-1)}$ and encoding $t(w)\in\{\tt{31},\tt{2}\}^*$.  Suppose that $u=u_1u_2\cdots u_{k-1}$ is a factor of $w$, where $u_1,u_2,\dots,u_{k-1}\in \Sigma_k$ are distinct letters.  Then $u^R$ is not a factor of $w$.
\end{Lemma}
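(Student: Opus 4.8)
The plan is to argue by contradiction using the parity (alternating-group) observation that immediately precedes the lemma. Suppose both $u = u_1u_2\cdots u_{k-1}$ and $u^R = u_{k-1}\cdots u_2 u_1$ occur as factors of $w$, where the $u_i$ are $k-1$ distinct letters. First I would locate these occurrences inside the factorization $w = p_0 v_1 v_2 \cdots v_n$ used to define the encoding $t(w)$, and record the associated permutations $r_i$ from the earlier discussion. The key point is that a block of $k-1$ consecutive distinct letters, read left to right, is exactly the ordered data recorded by the permutation $r_i$ at the position where the block ends: if $u$ ends at position $i$ in $w$ (i.e.\ $u$ is the length $k-1$ suffix of $p_i$), then the sequence $u_1,\dots,u_{k-1}$ is determined by $r_i$ — concretely, $u_2 u_3 \cdots u_{k-1} = r_i[\tt{2}] r_i[\tt{3}]\cdots r_i[\tt{k-1}]$ and $u_{k-1} = r_i[\tt{k}]$... actually since $u$ has $k-1$ distinct letters, $u$ is the suffix $r_i[\tt{2}]\cdots r_i[\tt{k}]$ of length $k-1$ (using that the last-appearing $k-1$ letters, in order of last appearance, are $r_i[\tt{2}],\dots,r_i[\tt{k}]$). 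Thus $u$ determines $r_i$ up to the position of the single missing letter, and similarly the occurrence of $u^R$ ending at some position $j$ determines $r_j$.

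Next I would exploit the identity $r_0 \, \sigma(t(p_i)) = r_i$ proven earlier, together with the hypothesis $t(w) \in \{\tt{31},\tt{2}\}^*$. Between the end of the $u$-occurrence and the end of the $u^R$-occurrence, the encoding reads off some word in $\{\tt{31},\tt{2}\}^*$ — here one must be a little careful that the occurrences of $u$ and $u^R$ start at positions that are consistent with the block structure of $\{\tt{31},\tt{2}\}$, but since $u$ consists of $k-1 \geq 3$ distinct letters, the position just after $u$ (and just after $u^R$) is forced to be the start of a new $\{\tt{31},\tt{2}\}$-block, because the letter two steps back inside $u$ is distinct from all of $u_1,\dots,u_{k-1}$'s repetitions — I'd make this precise by checking that inside a block of distinct letters the encoding cannot split a \tt{31} in the middle. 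Granting this, the permutation taking (the ``reduced'' data of) $r_i$ to that of $r_j$ is a product of $\sigma(\tt{31})$'s and $\sigma(\tt{2})$'s, hence an \emph{even} permutation of $\Sigma_k$, by the parity computation noted just before the lemma (valid since $k \equiv 0 \pmod 4$, so in fact $k$ even suffices for the parity, but the stated hypothesis is $k\equiv 0\pmod 4$).

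Finally I would derive the contradiction from parity alone: the permutation that sends the ordered tuple $(u_1,u_2,\dots,u_{k-1})$ to the ordered tuple $(u_{k-1},\dots,u_2,u_1)$ is the reversal permutation on $k-1$ symbols, which is a product of $\lfloor (k-1)/2 \rfloor$ transpositions; when $k \equiv 0 \pmod 4$ we have $k - 1 \equiv 3 \pmod 4$, so $\lfloor (k-1)/2 \rfloor = (k-2)/2$ is odd, and the reversal is an \emph{odd} permutation. Matching this against the even permutation built from $\sigma(\tt{31})$ and $\sigma(\tt{2})$ — after accounting for how the undetermined $k$-th coordinate $r[\tt{1}]$ behaves, which contributes nothing to the parity since that coordinate is fixed by the relevant identification — yields even $=$ odd, a contradiction. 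Hence $u^R$ is not a factor of $w$.

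The main obstacle I anticipate is the bookkeeping in the middle step: making rigorous the claim that an occurrence of a length-$(k-1)$ all-distinct block forces the encoding to be ``phase-aligned'' with the $\{\tt{31},\tt{2}\}$-block decomposition, and correctly tracking what the permutations $r_i$ and $r_j$ record about $u$ versus $u^R$ (in particular that the single unconstrained coordinate does not spoil the parity argument). The parity arithmetic itself — $k\equiv 0\pmod 4 \Rightarrow k-1\equiv 3 \pmod 4 \Rightarrow$ reversal on $k-1$ letters is odd — is routine, as is the fact that $\sigma(\tt{2})$ and $\sigma(\tt{31})$ generate a subgroup of $A_k$.
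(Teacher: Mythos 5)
Your proposal follows essentially the same parity argument as the paper's proof: the permutation $\sigma(t(x))$ carrying the ranking at the end of $u$ to the ranking at the end of $u^R$ is the reversal of $\tt{2},\dots,\tt{k}$ fixing $\tt{1}$, which is odd precisely when $k\equiv 0\pmod 4$, while membership of $t(x)$ in $\{\tt{31},\tt{2}\}^*$ would force it to be even. The phase-alignment issue you correctly flag as the main technical point is exactly what the paper nails down, by observing that $t(x)$ cannot end in $\tt{3}$ nor begin in $\tt{1}$, both consequences of $u_1\neq u_{k-1}$ (applied at the end of $u^R$ and just before the start of $t(x)$, respectively).
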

\begin{proof}
Suppose towards a contradiction that $u$ and $u^R$ are both factors of $w$.  Assume without loss of generality that $u$ appears before $u^R$ in $w$.  Then $w$ contains a factor $x$ with prefix $u$ and suffix $u^R$.  Consider the encoding $t(x),$ which is a factor of $t(w)$.

Immediately after reading $u$, the ranking of the letters in $\Sigma_k$ is
\[
\begin{pmatrix}
\tt{1} & \tt{2} & \tt{3} & \dots & \tt{k}\\ 
u_k & u_1 & u_2 & \dots & u_{k-1}
\end{pmatrix},
\]
where $u_k$ is the unique letter in $\Sigma_k\backslash\{u_1,u_2,\dots,u_{k-1}\}$.
Immediately after reading $u^R$, the ranking of the letters in $\Sigma_k$ is
\[
\begin{pmatrix}
\tt{1} & \tt{2} & \tt{3} & \dots & \tt{k}\\ 
u_k & u_{k-1} & u_{k-2} & \dots & u_1
\end{pmatrix}.
\]
Evidently, we have 
\[
\sigma(t(x))=\begin{pmatrix}
\tt{1} &\tt{2} & \tt{3} & \dots & \tt{k-1} & \tt{k}\\
\tt{1} & \tt{k} & \tt{k-1} & \dots & \tt{3} & \tt{2}
\end{pmatrix}.
\]

Since $k\equiv 0\pmod{4},$ we observe that $\sigma(t(x))$ is an odd permutation.  We claim that $t(x)$ does not begin in $\tt{1}$ or end in $\tt{3}$, so that $t(x)\in\{\tt{31},\tt{2}\}^*$.  But $\sigma(\tt{31})$ and $\sigma(\tt{2})$ are both even, which contradicts the fact that $\sigma(t(x))$ is odd.

The fact that $t(x)$ does not end in $\tt{3}$ follows immediately from the fact that $u_{k-1}\neq u_1$.  It remains to show that $t(x)$ does not begin with $\tt{1}$.  If $x$ is a prefix of $w$, then $t(x)$ begins in $\tt{3}$ or $\tt{2}$, so we may assume that $w=yxz$ with $y\neq \varepsilon$.  Then $t(w)$ has prefix $t(yu)t(x)$.  If $t(x)$ began in $\tt{1}$, then $t(yu)$ would necessarily end in $3$, and this is impossible since $u_1\neq u_{k-1}$.  This completes the proof of the claim, and the lemma.
\end{proof}


\subsection{Constructions}

Define morphisms $f_4,f_8,f_{12}:\Sigma_2^*\rightarrow \Sigma_2^*$ as follows:
\begin{align*}
f_4(\tt{1})&=\tt{121}\\
f_4(\tt{2})&=\tt{122}\\[5pt]
f_8(\tt{1})&=\tt{121212112122121}\\
f_8(\tt{2})&=\tt{211212122122112}\\[5pt]
f_{12}(\tt{1})&=\tt{121212121211212122121}\\
f_{12}(\tt{2})&=\tt{212122112121121212212}.
\end{align*}  
Define $g:\Sigma_2^*\rightarrow \Sigma_3^*$ by
\begin{align*}
g(\tt{1})&=\tt{31}\\
g(\tt{2})&=\tt{312}.
\end{align*}
A key property of each of the morphisms $f_4,$ $f_8,$ $f_{12},$ and $g$ is that the images of $\tt{1}$ and $\tt{2}$ end in different letters.  

\begin{theorem}\label{Main}
Fix $k\in\{4,8,12\},$ and let $f=f_k$.  Let $\bm{w}$ be the word over $\Sigma_k$ with prefix $\tt{12}\cdots\tt{(k-1)}$ and encoding $g(f^\omega(\tt{1}))$.  Then $\bm{w}$ is undirected $(k-1)/(k-2)^+$-free.
\end{theorem}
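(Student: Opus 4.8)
The plan is to split undirected powers into reverse powers and ordinary powers and to use the ternary encoding developed above for both. I would begin by verifying that the encoding genuinely applies to $\bm{w}$ and extracting two structural facts from it. Since $g(\tt{1})=\tt{31}$ and $g(\tt{2})=\tt{312}$, the word $t(\bm{w})=g(f^\omega(\tt{1}))$, read as a sequence over the ternary alphabet, is a concatenation of the blocks $\tt{31}$ and $\tt{312}$; hence (as $\tt{312}=\tt{31}\tt{2}$) it lies in $\{\tt{31},\tt{2}\}^*$, its only length-$2$ factors are $\tt{31}$, $\tt{13}$, $\tt{12}$ and $\tt{23}$, and in particular it never contains $\tt{33}$ while every two consecutive symbols of it contain a $\tt{1}$ or a $\tt{2}$. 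Tracking the rankings $r_i$, one checks that every length-$(k-2)$ factor of $\bm{w}$ has $k-2$ distinct letters, that a length-$(k-1)$ window $w_{j+1}\cdots w_{j+k-1}$ of $\bm{w}$ has $k-1$ distinct letters precisely when $t_{j+k-1}\in\{\tt{1},\tt{2}\}$, and that a length-$k$ factor of $\bm{w}$ can fail to contain $k-1$ distinct letters only when two consecutive encoding symbols form $\tt{33}$ — which never happens. So $g(f^\omega(\tt{1}))$ legitimately encodes $\bm{w}$, and since $k\equiv 0\pmod{4}$ for $k\in\{4,8,12\}$ and $t(\bm{w})\in\{\tt{31},\tt{2}\}^*$, Lemma~\ref{Alternating} applies to $\bm{w}$. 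Reverse powers then fall out: since every two consecutive symbols of $t(\bm{w})$ contain a $\tt{1}$ or a $\tt{2}$, every factor of $\bm{w}$ of length at least $k$ contains a length-$(k-1)$ window of distinct letters, whose reverse is not a factor of $\bm{w}$ by Lemma~\ref{Alternating}; hence $\bm{w}$ has no reversible factor of length $\ge k$. If $xy\rev{x}$ is a reverse $r$-power of $\bm{w}$ with $r>(k-1)/(k-2)$, then $x$ is reversible, so $|x|\le k-1$, and the inequality $r>(k-1)/(k-2)$ amounts to $(k-3)|x|>|y|$, so $|xy\rev{x}|<(k-1)^2$; a finite inspection of a long enough prefix of $\bm{w}$, computable from $f$ and $g$, rules out every reverse $(k-1)/(k-2)^+$-power.

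It remains to prove that $\bm{w}$ is ordinary $(k-1)/(k-2)^+$-free. Suppose for contradiction that $\bm{w}$ has a factor $pe$ of exponent $r=|pe|/|p|>(k-1)/(k-2)$ with excess $e$. If $|e|$ is below an explicit bound of order $k$, then $|pe|=|p|+|e|<(k-1)|e|$ is bounded and this case is absorbed into the finite check above; so assume $|e|$ is large, so that $e$ contains at least $k-1$ distinct letters and, by the construction of the encoding, the portion of $t(\bm{w})=g(f^\omega(\tt{1}))$ encoding $pe$ is a kernel repetition: it has period $|p|$, its excess has length $|e|-u'$ with $u'\in\{k-1,k\}$, and the length-$|p|$ prefix $Z$ of its period satisfies $\sigma(Z)=\mathrm{id}$. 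I would then desubstitute in two stages. Because the images of $\tt{1}$ and $\tt{2}$ under $g$ end in distinct letters, $\{g(\tt{1}),g(\tt{2})\}$ is a suffix code and $g$ is synchronizing; combined with the periodicity of the kernel repetition, this forces its period to respect block boundaries up to bounded error, so the repetition is, up to bounded prefix and suffix, the $g$-image of a factor $C$ of $f^\omega(\tt{1})$ that has a period $D$ with $\sigma(g(D))=\mathrm{id}$, with $|D|$ large and $|C|/|D|$ equal to $r$ up to an additive $O(1/|p|)$ error — the bounded block-pieces trimmed off contribute only bounded elements of $S_k$, which must be carried along. The morphism $f=f_k$ has the same property — the images of $\tt{1}$ and $\tt{2}$ under $f_k$ end in distinct letters — so it too is synchronizing, and since $f^\omega(\tt{1})=f(f^\omega(\tt{1}))$, a repetition of this kind in $f^\omega(\tt{1})$ desubstitutes to a strictly shorter one of the same kind, with the approximate exponent inequality and the identity condition on the period preserved up to bounded error. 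Iterating the desubstitution drives the period below the scale at which the bounded errors matter, and a finite computation on a prefix of $f^\omega(\tt{1})$ — equivalently of $g(f^\omega(\tt{1}))$ — delivers the contradiction.

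I expect the two-stage descent for ordinary powers to be the main obstacle: one has to pin down the synchronization properties of the non-uniform morphisms $g$ and $f_k$ precisely enough, verify that the kernel condition $\sigma(Z)=\mathrm{id}$ survives the pullback (first as $\sigma(g(D))=\mathrm{id}$, then as an identity condition for $\sigma\circ g$ composed with further powers of $f$, keeping the relevant element of $S_k$ rather than discarding it), and keep tight control of how much the exponent degrades at each stage, so that the descent together with a finite base case yields exactly the bound $(k-1)/(k-2)$ and not a weaker one. The reverse-power half, by comparison, is essentially immediate once Lemma~\ref{Alternating} is available. The three cases $k\in\{4,8,12\}$ run through the same argument; only the finite checks and the morphism $f_k$ vary with $k$, which is why the $k=8$ and $k=12$ constructions need the longer morphisms $f_8$ and $f_{12}$.
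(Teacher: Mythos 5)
Your overall strategy---dispatch reverse powers via Lemma~\ref{Alternating} plus a finite check on short factors, then pull an ordinary kernel repetition back first through $g$ and then iteratively through $f$---is exactly the paper's route, and the reverse-power half is essentially correct as you present it. The gaps are in the ordinary-power descent, precisely at the two places you yourself flag as the main obstacle.

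First, the kernel condition does \emph{not} turn into a shifting ``identity condition for $\sigma\circ g$ composed with further powers of $f$'' that you must carry along with accumulating bounded-error elements of $S_k$. The morphisms $f_k$ are engineered so that $\tau=\sigma\circ g$ satisfies the conjugacy relation $\phi\,\tau(f(\tt{a}))\,\phi^{-1}=\tau(\tt{a})$ for a single fixed $\phi\in S_k$ and both $\tt{a}\in\{\tt{1},\tt{2}\}$ (Moulin-Ollagnier's ``algebraic property''). This is what makes each desubstituted period $\pi_i$ lie in the \emph{same} kernel $\ker\tau$ at every level, giving a clean invariant. Your proposal never identifies this and so never supplies a reason why the descent terminates with a testable condition rather than a drifting one.

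Second, the descent is exact, not approximate. Because $g(\tt{1})$ and $g(\tt{2})$ end in distinct letters, the maximal period-$q$ extension $PE$ of the kernel repetition satisfies $P=g(\pi)$ and $PE=g(\pi\eta)\tt{31}$ on the nose, and likewise at each $f$-desubstitution step one has $\pi_i\eta_i=f(\pi_{i+1}\eta_{i+1})\eta'$ with $\eta'$ the longest common prefix of $f(\tt{1}),f(\tt{2})$. Feeding the exact length relations $|\pi_0|=r^s|\pi_s|$, $|\eta_0|=r^s|\eta_s|+|\eta'|\sum_{i<s}r^i$ into the exponent bound yields the clean inequality $|\pi_s|<\tfrac{(k-2)(3|\eta_s|+k+1)}{2}$, and $|\eta_s|<r$ because every length-$r$ factor of $f^\omega(\tt{1})$ contains a cut; so the base case is a genuine finite enumeration, not ``a finite computation on a prefix''. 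Crucially, for $k=4$ that enumeration does \emph{not} come up empty: two candidate pairs $(\pi_s,\eta_s)$ survive, and they are excluded only by a second, sharper inequality exploiting the fact that $f_4$ preserves the balance $|\cdot|_{\tt{1}}=|\cdot|_{\tt{2}}$, which forces $|P|=\tfrac{5}{2}|\pi_0|$ and tightens the bound to $|\pi_s|<2|\eta_s|+4$. Your proposal does not anticipate this and, as written, the $k=4$ case would not close.
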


The remainder of this section is devoted to proving Theorem~\ref{Main}.  Essentially, we adapt the technique first used by Moulin-Ollagnier~\cite{MoulinOllagnier1992}.  A simplified version of Moulin-Ollagnier's technique, which we follow fairly closely, is exhibited by Currie and Rampersad~\cite{CurrieRampersad2011}.  For the remainder of this section, we use notation as in Theorem~\ref{Main}.  We let $r=|f(\tt{1})|$, i.e., we say that $f$ is $r$-uniform.

We first discuss kernel repetitions appearing in $g(f^\omega(\tt{1}))$.
Let factor $v=pe$ of $g(f^\omega(\tt{1}))$ be a kernel repetition with period $q$; say $g(f^\omega(\tt{1}))=xv\bm{y}$.  Let $V=x'vy'$ be the maximal period $q$ extension of the occurrence $xv\bm{y}$ of $v$.  Write $x=Xx'$ and $\bm{y}=y'\bm{Y},$ so that $g(f^\omega(\tt{1}))=XV\bm{Y}$.  Write $V=PE=EP',$ where $|P|=q$.  By the periodicity of $PE$, the factor $P$ is conjugate to $p$, and hence $P$ is in the kernel of $\sigma$.  Write $P=\pi''g(\pi)\pi'$ where $\pi''$ is a proper suffix of $g(\tt{1})$ or $g(\tt{2})$, and $\pi'$ is a prefix of $g(\tt{1})$ or $g(\tt{2})$.  Analogously, write $E=\eta''g(\eta)\eta'$.  Since $g(\tt{1})$ and $g(\tt{2})$ end in different letters, it follows from the maximality of $V$ that $\pi''=\eta''=\varepsilon$.  In particular, the word $P$ begins in $\tt{3}$.  It follows that $E$ begins in $\tt{3}$, and thus we may assume that $\pi'=\varepsilon$.  Finally, by the maximality of $V$, we have $\eta'=\tt{31}$, the longest common prefix of $g(\tt{1})$ and $g(\tt{2})$.  Altogether, we can write
\[
PE=g(\pi\eta)\tt{31},
\]
where $g(\pi)=P$ and $\eta$ is a prefix of $\pi$.  We see that $|P|\geq 2|\pi|$ and $|E|\leq 3|\eta|+2$.

Let $\tau:\Sigma_2^*\rightarrow S_k$ be the composite morphism $\sigma\circ g$.  Evidently, we have
\begin{align*}
\tau(\tt{1})&=\sigma(g(\tt{1}))=\sigma(\tt{31})
, \mbox{ and}\\
\tau(\tt{2})&=\sigma(g(\tt{2}))=\sigma(\tt{312})
.
\end{align*}
Since $P$ was in the kernel of $\sigma$, we see that
\[
\tau(\pi)=\sigma(g(\pi))=\sigma(P)=\mathrm{id},
\]
i.e., the word $\pi$ is in the kernel of $\tau$.

Now set $\pi_0=\pi$ and $\eta_0=\eta$.  By the maximality of $PE,$ the repetition $\pi\eta=\pi_0\eta_0$ must be a maximal repetition with period $|\pi_0|$ (i.e., it cannot be extended).  If $\eta_0$ has a cut, then it follows by arguments similar to those used above that $\pi_0\eta_0=f(\pi_1\eta_1)\eta'$, where $\eta_1$ is a prefix of $\pi_1$ and $\eta'$ is the longest common prefix of $f(\tt{1})$ and $f(\tt{2})$.  One checks that there is an element $\phi\in S_k$ such that
\begin{align*}
\phi\cdot \tau(f(\tt{a}))\cdot \phi^{-1}=\tau(\tt{a})
\end{align*}
for every $\tt{a}\in\{\tt{1},\tt{2}\}$, i.e., the morphism $\tau$ satisfies the ``algebraic property'' described by Moulin-Ollagnier~\cite{MoulinOllagnier1992}.  It follows that $\pi_1$ is in the kernel of $\tau$.  We can repeat this process until we reach a repetition $\pi_s\eta_s$ whose excess $\eta_s$ has no cut.  Recalling that $f$ is an $r$-uniform morphism, we have
\[
|\pi_0|=r^s|\pi_s|
\]
and 
\[
|\eta_0|=r^s|\eta_s|+|\eta'|\sum_{i=0}^{s-1}r^i
.
\]
Note that $|\eta'|=2$ if $k=4$, while $|\eta'|=0$ if $k\in\{8,12\}$.  Thus, we have
\begin{align*}
|\eta_0|&=
\begin{cases}
r^s|\eta_s|+r^s-1, &\text{if $k=4$};\\
r^s|\eta_s|, &\text{if $k\in\{8,12\}$}.
\end{cases}
\end{align*}
It follows that $|\eta_0|\leq r^s|\eta_s|+r^s-1$.

\begin{proof}[Proof of Theorem~\ref{Main}]
We first show that $\bm{w}$ contains no reverse $\alpha$-power with $\alpha>(k-1)/(k-2)$.  Since $\tt{33}$ is not a factor of $g(f^\omega(\tt{1}))$, every factor of length $k$ in $\bm{w}$ contains a factor of the form $u=u_1u_2\cdots u_{k-1}$, where $u_1,u_2,\dots,u_{k-1}$ are distinct letters.  Thus, by Lemma~\ref{Alternating}, if $xy\rev{x}$ is a factor of $\bm{w}$ with $|xy\rev{x}|/|xy|>(k-1)/(k-2)$, then $|x|\leq k-1$.  In turn, we have $|xy\rev{x}|< (k-1)^2$.  Therefore, we conclude by a finite check that $\bm{w}$ contains no reverse $\alpha$-power with $\alpha>(k-1)/(k-2)$. 

It remains to show that $\bm{w}$ is ordinary $(k-1)/(k-2)^+$-free.  Suppose to the contrary that $pe$ is a factor of $\bm{w}$ such that $e$ is a prefix of $pe$ and $|pe|/|p|>(k-1)/(k-2)$.  We may assume that $pe$ is maximal with respect to having period $|p|$.  If $e$ has less than $k-1$ distinct letters, then $|e|\leq k-1$.  In turn, we have $|pe|<(k-1)^2$.  By a finite check, the word $\bm{w}$ has no such factors.

So we may assume that $e$ has at least $k-1$ distinct letters.  Let $V=t(pe)$, and let $P$ be the length $|P|$ prefix of $V$.  So $V=PE$, where $E$ is a prefix of $P$.  Hence $V$ is a kernel repetition, i.e., the word $P$ is in the kernel of $\sigma$.  By the maximality of $pe$, we see that $P$ begins in $\tt{3}$.  Hence, the length $k-1$ prefix of $p$ contains $k-1$ distinct letters, and $|e|=|E|+k-1$.  We can find a factor $\pi_s\eta_s$ of $f^\omega(\tt{1})$ as described above, such that $\eta_s$ is a prefix of $\pi_s\eta_s$, the word $\pi_s$ is in the kernel of $\tau$, and $\eta_s$ does not contain a cut.  Now
\begin{align*}
    \frac{1}{k-2}&< \frac{|e|}{|p|}\\
    &=\frac{|E|+k-1}{|P|}\\
    &\leq \frac{3|\eta_0|+k+1}{2|\pi_0|}\\
    &=\frac{3\left(r^s|\eta_s|+r^s-1\right)+k+1}{2r^s|\pi_s|}\\
    &=\frac{3r^s\left(|\eta_s|+1\right)+k-2}{2\cdot r^s|\pi_s|}\\
    &=\frac{3(|\eta_s|+1)+(k-2)r^{-s}}{2|\pi_s|}\\
    &\leq \frac{3|\eta_s|+k+1}{2|\pi_s|}.
\end{align*}
Thus, we have 
\begin{align}\label{General}
|\pi_s|<\frac{(k-2)(3|\eta_s|+k+1)}{2}.
\end{align}

By exhaustive check, every factor of length $r$ in $f^\omega(\tt{1})$ contains a cut, so we must have $|\eta_s|<r$, and we can list all possibilities for $\eta_s$.  For each possible value of $\eta_s$, we can enumerate all possibilities for $\pi_s$ using~(\ref{General}).  At this point, our argument depends on the value of $k$.  

If $k\in\{8,12\}$, then we find that no such factor $\pi_s\eta_s$ exists in $f^\omega(\tt{1})$.  On the other hand, if $k=4$, then we find only the following two pairs satisfying~(\ref{General}):
\begin{itemize}
\item $\pi_s=\tt{2121} \mbox{ and } \eta_s=\varepsilon;$
\item
$\pi_s=\tt{2112112212} \mbox{ and } \eta_s=\tt{21}.$
\end{itemize}
Note, however, that for each pair, we have $|\pi_s|_\tt{1}=|\pi_s|_\tt{2}$ and $|\eta_s|_\tt{1}=|\eta_s|_\tt{2}$.  Since $f_4(\tt{1})=\tt{121}$ and $f_4(\tt{2})=\tt{122},$ it follows that $|\pi_0|_\tt{1}=|\pi_0|_\tt{2}$ and $|\eta_0|_\tt{1}=|\eta_0|_\tt{2}$.  In this case, we have $|P|=\tfrac{5}{2}|\pi_0|$ and $|E|=\tfrac{5}{2}|\eta_0|+2$.  By adapting the string of inequalities leading to~(\ref{General}), we find that we must in fact have
\begin{align*}
|\pi_s|<2|\eta_s|+4,
\end{align*}
and this does not hold in either case.

Thus, we conclude that $\bm{w}$ is undirected $(k-1)/(k-2)^+$-free.
\end{proof}

\section*{Acknowledgements}

We thank the anonymous reviewers, whose comments helped to improve the article.


\end{document}